\numberwithin{equation}{section}
\def\PP{\mathbb{P}}
\def\RR{\mathbb{R}}
\def\NN{\mathbb{N}}
\def\EE{\mathbb{E}}
\def\11{\mathbbm{1}}
\def\E{\mathbb{E}}
\def\P{\mathbb{P}}
\def\R{\mathbb{R}}
\def\N{\mathbb{N}}
\def\d{\partial}
\def\Z{\mathbb{Z}}
\def\ZZ{\mathbb{Z}}
\newtheorem{thm}{Theorem}[section]
\newtheorem{cor}[thm]{Corollary}
\newtheorem{prop}[thm]{Proposition}
\theoremstyle{remark}
\newtheorem{rem}{Remark}
\newtheorem{exa}{Example}
\begin{document}

\title{Population processes with unbounded extinction rate conditioned to non-extinction}

\author{Nicolas Champagnat$^{1,2,3}$, Denis Villemonais$^{1,2,3}$}

\footnotetext[1]{Universit\'e de Lorraine, IECL, UMR 7502, Campus Scientifique, B.P. 70239,
  Vand{\oe}uvre-l\`es-Nancy Cedex, F-54506, France}
\footnotetext[2]{CNRS, IECL, UMR 7502,
  Vand{\oe}uvre-l\`es-Nancy, F-54506, France} \footnotetext[3]{Inria, TOSCA team,
  Villers-l\`es-Nancy, F-54600, France.\\
  E-mail: Nicolas.Champagnat@inria.fr, Denis.Villemonais@univ-lorraine.fr}

\maketitle

\begin{abstract}
  This article studies the quasi-stationary behaviour of population processes with unbounded absorption rate, including
  one-dimensional birth and death processes with catastrophes and multi-dimensional birth and death processes, modeling biological populations in interaction. 
  To handle this situation, we develop original non-linear Lyapunov criteria.
  We obtain the exponential convergence in total variation of the conditional distributions to a unique quasi-stationary
  distribution, uniformly with respect to the initial distribution. Our results cover all one-dimensional birth and death processes
  which come down from infinity with catastrophe rate satisfying appropriate bounds, and multi-dimensional birth and death models with
  stronger intra-specific than inter-specific competition.
\end{abstract}

\noindent\textit{Keywords:} {birth and death process with catastrophes; multidimensional birth and death process; process absorbed on
  the boundary; quasi-stationary distribution; uniform exponential mixing property; Lyapunov function; strong
  intra-specific competition}

\medskip\noindent\textit{2010 Mathematics Subject Classification.} Primary: {60J27; 37A25; 60B10}. Secondary: {92D25; 92D40}.

\section{Introduction}
\label{sec:intro}

This article is devoted to the study of the quasi-stationary behavior of continuous time Markov processes $(X_t)_{t\geq 0}$ in a discrete state space $E$ almost surely absorbed in finite time at some cemetery point $\d$. This means that  $X_t=\d$ for all $t\geq\tau_\d$ and $\P_x(\tau_\d<\infty)=1$ for all $x\in E$, where $\tau_\d=\inf\{t\geq 0,\ X_t=\d\}$ is the absorption time of the process. We recall that $\alpha$ is a quasi-stationary distribution if it is a probability measure on $E$ such that
$$
\PP_\alpha(X_t\in\cdot\mid t<\tau_\d)=\alpha,\quad\forall t\geq 0.
$$
In Section~\ref{sec:lyap}, our goal is to provide a tractable criterion ensuring the existence of a unique quasi-stationary
distribution $\alpha$ on $E$ such that, for all probability measure $\mu$ on $E$,
\begin{equation}
  \label{eq:conv}
  \left\|\PP_\mu(X_t\in\cdot\mid t<\tau_\d)-\alpha\right\|_{TV}\leq C e^{-\lambda t},\quad t\geq 0,
\end{equation}
where $C,\gamma$ are positive constants and $\|\cdot\|_{TV}$ is the usual total variation distance. Our criterion is based on the
computation of the infinitesimal generator of the process and Lyapunov-type functions. It takes the form of the following non-linear
Foster-Lyapunov criterion : there exists a bounded function $V:E\rightarrow\R_+$ and a norm-like function $W:E\rightarrow\R_+$ such
that, for any probability measure $\mu$ on $E$,
\begin{equation}
  \label{eq:lyap-intro}
  \mu(LV)-\mu(V)\mu(L\11_E)\leq A-B\mu(W),
\end{equation}
for some constants $A,B>0$ and where $L$ is the generator of the process $X$ (see Section~\ref{sec:lyap} for details).

Many properties can be deduced from~\eqref{eq:conv}. First, regardless of the initial condition, the quasi-stationary distribution describes
the state of the population when it survives for a long time. One of the most notable features of quasi-stationary populations is the
existence of a so-called \emph{mortality/extinction plateau}: there exists $\lambda_0>0$ limit of the extinction rate of the
population (see~\cite{meleard-villemonais-12}). The constant $-\lambda_0$ is actually the largest non-trivial eigenvalue of the
generator $L$ and satisfies
$$
\PP_\alpha(t<\tau_\d)=e^{-\lambda_0 t},\quad\forall t\geq 0.
$$
In addition,~\eqref{eq:conv} implies that $x\mapsto e^{\lambda_0 t}\PP_x(t<\tau_\d)$ converges uniformly to a function $\eta:E\rightarrow(0,+\infty)$ when $t\rightarrow+\infty$~\cite[Theorem~2.1]{ChampagnatVillemonais2016U}. Moreover, $\eta$ is the eigenfunction of $L$ corresponding to the eigenvalue $\lambda_0$~\cite[Prop.\,2.3]{ChampagnatVillemonais2014}. It also implies  the existence and the exponential ergodicity of the associated $Q$-process, defined as the process $X$ conditionned to never be
extinct (see~\cite[Thm.\,3.1]{ChampagnatVillemonais2014} for a precise definition). The convergence of the conditional laws of $X$ to the law of the $Q$-process holds also uniformly in total variation norm~\cite[Theorem~2.1]{ChampagnatVillemonais2016U}, which entails conditional ergodic properties~\cite[Corollary~2.2]{ChampagnatVillemonais2016U}.

\bigskip The second part of the paper is devoted to the application of the criterion~\eqref{eq:lyap-intro} to birth and death
processes with catastrophes in dimension~1 or more.

In Section~\ref{sec:1D}, we prove that uniform exponential convergence to a unique quasi-stationary distribution holds for all one-dimensional birth and death processes coming down from infinity with catastrophe rate satisfying appropriate bounds. It was already known that~\eqref{eq:conv} holds if and only if the process comes down from infinity for birth and death processes without catastrophes~\cite{vanDoorn1991,Martinez-Martin-Villemonais2012} and birth and death processes with bounded catastrophe rates~\cite{vanDoorn2012,ChampagnatVillemonais2016}. Our result only assumes suitable growth conditions of the oscillations of the catastrophe rate. This result can be easily transfered to multi-type birth and death processes dominated by one-dimensional birth and death processes coming down from infinity (see Section~\ref{sec:multi-d-dom}).

In Section~\ref{sec:multi-d-strong-intra}, we focus on multi-type birth and death processes with logistic competition (also known as
competitive Lotka-Volterra interaction), absorbed when one of the coordinates hits $0$ or when a catastrophe occurs. This case is
critical with respect to the previous method in the sense that the absorption rate when the process is close to the boundary (i.e.\
when one of the coordinates of the process is $1$) does not satisfy, but nearly satisfies, the bounds obtained in
Section~\ref{sec:multi-d-dom}. Using a different Lyapunov function, we are able to prove that~\eqref{eq:conv} holds provided that the
intra-specific competition is stronger than the inter-specific competition, in the sense that the competition coefficient between
individuals of different species is much smaller than the competition coefficient between individuals of the same species. In Biology, this
assumption is particularly relevant to model a multi-type population where individuals survive by consuming type-specific resources.
This is for example the case for populations of bacteria in a chemostat which are specialized for consuming different
resources~\cite{champagnat-jabin-meleard-2014}.

Quasi-stationary distributions for population processes have received much interest in the recent years (see the
surveys~\cite{meleard-villemonais-12,vanDoorn2013} and the book~\cite{collet-martinez-al-13b}). The specific question of estimates on
the speed of convergence to quasi-stationary distributions for one dimensional birth and death processes has been studied
in~\cite{diaconis-miclo-09,chazottes-al-15}. The multi-dimensional situation, which takes into account the existence of several types
of individuals in a population, is much less understood, except in the branching case of multi-type Galton-Watson processes
(see~\cite{Athreya1972,penisson-11}) and for specific cooperative models with bounded absorption rate
(see~\cite{ChampagnatVillemonais2014}). For results on the quasi-stationary behaviour of continuous-time and continuous state space
models, we refer to~\cite{CCLMMS09,Littin2012,ChampagnatVillemonais2015} for the one dimensional case and
to~\cite{Pinsky1985,Gong1988,Cattiaux2008,KnoblochPartzsch2010, DelMoralVillemonais2015, ChampagnatCoulibalyVillemonais2015} for the
multi-dimensional situation. Infinite dimensional models have been studied in~\cite{Collet2011,ChampagnatVillemonais2014}. Several
papers studying the quasi-stationary behaviour of models applied to biology, chemistry, demography and finance are listed
in~\cite{Pollett}.

\section{General Lyapunov citerion for exponential convergence of conditional distributions}
\label{sec:lyap}

We consider a continuous-time Markov process $(X_t,t\geq 0)$ on a state space $E\cup\{\d\}$ with $E$ denumerable, almost surely
absorbed in finite time in $\d$. We denote by $q_{x,y}\geq 0$ the jump rate of $X$ from $x\in E\cup\{\d\}$ to $y\neq x$ and recall
that $\sum_{y\not=x}q_{x,y}<\infty$ for all $x\in E\cup\{\d\}$ is needed for the Markov process $X$ to be well-defined. The fact that
$\d$ is absorbing means that $q_{\d,x}=0$ for all $x\in E$. We set $\tau_\d=\inf\{t\geq 0:X_t=\d\}$. We use the usual notation
$\P_x=\P(\cdot\mid X_0=x)$ and for all probability measure $\mu$ on $E\cup\{\d\}$, $\P_\mu=\int_{E\cup\{\d\}}\P_x\mu(dx)$.

We assume that it is irreductible away from $\d$ in the sense that:
\begin{equation}
  \label{hyp:Lyap}
  \mbox{For all }x,y\in E,\ \P_x(X_1=y)>0.
\end{equation}
We recall that, by classical properties of discrete Markov processes in continuous time, the time $t=1$ in this assumption is
arbitrary and could be replaced by any other positive value.

We denote by $L$ its generator, defined for all bounded
$\phi:E\cup\{\d\}\rightarrow \R$ as
$$
L\phi(x)=\sum_{y\in E\cup\{\d\},\ y\neq x}q_{x,y}[\phi(y)-\phi(x)],\quad\forall x\in E\cup\{\d\}.
$$
Note that we use here a definition a bit more general than the usual (standard or weak) infinitesimal generator(s), since we do not
require $L\phi$ to be bounded. We extend the last definition to bounded functions $\phi:E\rightarrow \R$ defined only on $E$ as
$$
L\phi(x)=\sum_{y\in E,\ y\neq x}q_{x,y}[\phi(y)-\phi(x)],\quad\forall x\in E.
$$
Note that $L\11_E(x)\leq 0$ for all $x\in E$ and $L\11_E\not\equiv 0$ since $\tau_\d<\infty$ a.s.

The next result gives our non-linear Lyapunov criterion. We will say that a function $W:E\rightarrow\mathbb{R}$ is \emph{norm-like}
if $\{x\in E:W(x)\leq a\}$ is finite for all $a\in\mathbb{R}$. Equivalently, this means that $W$ converges to $+\infty$ out of finite
subsets of $E$, in the sense that
$$
\lim_{n\rightarrow+\infty} \inf_{x\not\in K_n} W(x)=+\infty
$$
for all increasing sequence $(K_n)_{n\geq 1}$ of subsets of $E$ such that $\cup_n K_n=E$.

\begin{thm}
  \label{thm:Lyap}
  Assume that there exist a bounded function $V:E\rightarrow\mathbb{R}_+$ such that $LV$ is bounded from above and a norm-like
  function $W:E\rightarrow\mathbb{R}$ such that, for any probability measure $\mu$ on $E$,
  \begin{equation}
    \label{eq:Lyap}
    \mu(LV)-\mu(V)\mu(L\11_E)\leq A-B\mu(W),
  \end{equation}
  for some constants $A,B>0$. Then, assuming~\eqref{hyp:Lyap}, the process $X$ admits a unique quasi-stationary distribution
  $\nu_{QSD}$ and there exist constants $C,\gamma>0$ such that for all probability measure $\mu$ on $E$,
  \begin{equation}
    \label{eq:cv-expo}
    \|\P_\mu(X_t\in\cdot\mid <\tau_\d)-\nu_{QSD}\|_{TV}\leq C e^{-\gamma t},\quad\forall t\geq 0,
  \end{equation}
  where $\|\cdot\|_{TV}$ is the total variation norm on finite signed measures, defined by $\|\mu\|_{TV}=\sup_{f\in L^\infty(E),\
    \|f\|_\infty\leq 1}|\mu(f)|$.
\end{thm}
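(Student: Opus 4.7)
My plan is to reduce Theorem~\ref{thm:Lyap} to the abstract criterion of~\cite[Theorem~2.1]{ChampagnatVillemonais2016U}, which characterizes uniform exponential convergence~\eqref{eq:cv-expo} in terms of two conditions: (\textbf{A1}) a Doeblin-type minorization of the conditional semigroup at some finite time, and (\textbf{A2}) a uniform comparison of survival probabilities $\P_x(t<\tau_\d)$ from different initial points with those from a reference measure. The whole job is to derive both from the non-linear Foster-Lyapunov inequality~\eqref{eq:Lyap}.

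The starting observation is that for a bounded $f$ in the domain of $L$, the conditional law $\mu_t:=\P_\mu(X_t\in\cdot\mid t<\tau_\d)$ solves the non-linear forward equation
$$
\frac{d}{dt}\mu_t(f)=\mu_t(Lf)-\mu_t(f)\,\mu_t(L\11_E),
$$
since $\mu_t(f)=\mu(P_tf)/\mu(P_t\11_E)$ for the sub-Markov semigroup $P_t$. Applying this to $f=V$, the assumption~\eqref{eq:Lyap} gives $\frac{d}{dt}\mu_t(V)\leq A-B\mu_t(W)$. Because $V$ is bounded, integration yields the time-averaged bound
$$
\frac{1}{t}\int_0^t\mu_s(W)\,ds\leq \frac{A}{B}+\frac{2\|V\|_\infty}{Bt},
$$
uniformly in the initial distribution $\mu$. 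Since $W$ is norm-like, Markov's inequality implies that for every $\varepsilon>0$ there is a finite set $K\subset E$ and a time $t_*>0$ such that, for any $\mu$, one can pick $s_\mu\in[0,t_*]$ with $\mu_{s_\mu}(K)\geq 1-\varepsilon$. The irreducibility assumption~\eqref{hyp:Lyap} then promotes this to a genuine Doeblin lower bound: there exist $y_0\in E$, $t_0>0$ and $c>0$ such that $\P_\mu(X_{t_0}=y_0\mid t_0<\tau_\d)\geq c$ for all $\mu$, which is exactly (\textbf{A1}).

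The second ingredient (\textbf{A2}), the uniform survival comparison, is where the non-linear structure of~\eqref{eq:Lyap} is used most crucially. I would turn the Lyapunov inequality into a lower bound on $\mu(P_t\11_E)=\P_\mu(t<\tau_\d)$ by writing $\log \mu(P_t\11_E)=-\int_0^t \mu_s(-L\11_E)\,ds$ and using the Lyapunov inequality, together with the boundedness of $V$ and of $LV$ from above, to control the instantaneous extinction rate $\mu_s(-L\11_E)$ in terms of $\mu_s(W)$. The idea is to iterate on short time intervals and reset the conditional distribution at the endpoints to something comparable to $\delta_{y_0}$ using the Doeblin estimate just obtained; this forces long-time survival from any $x$ to be bounded below by a fixed multiple of survival from the reference point $y_0$. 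Once (\textbf{A1}) and (\textbf{A2}) are in hand, \cite[Theorem~2.1]{ChampagnatVillemonais2016U} delivers existence and uniqueness of $\nu_{QSD}$ together with the exponential bound~\eqref{eq:cv-expo}.

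The main obstacle I foresee is step (\textbf{A2}): the Doeblin minorization follows rather directly from the time-averaged tightness of $\mu_t$ and~\eqref{hyp:Lyap}, but converting the non-linear inequality into uniform control of the ratio $\P_x(t<\tau_\d)/\P_{y_0}(t<\tau_\d)$ is delicate, precisely because the inequality mixes $\mu(LV)$ and $\mu(V)\mu(L\11_E)$ in a way that prevents a naive pointwise comparison of $P_tV$ with $V$. I expect that the right device is to consider the auxiliary functional $\mu\mapsto \mu(V)+\kappa$ for a suitable constant $\kappa$ so as to make its conditional evolution sub-exponential, and to combine this with the Doeblin regeneration at the times $t_0,2t_0,\dots$ to obtain a multiplicative lower bound on survival that is uniform in the starting state.
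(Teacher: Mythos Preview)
Your overall strategy---reduce to conditions (A1) and (A2) of \cite{ChampagnatVillemonais2014,ChampagnatVillemonais2016U}---is exactly what the paper does, and your treatment of (A1) is essentially the paper's: integrate the conditional evolution of $V$ to get a time on a bounded interval at which $\mu_s(W)$ is controlled, hence $\mu_s$ puts mass $\geq 1/2$ on a fixed finite set, then use irreducibility and the finiteness of jump rates to transfer this mass to a single point $x_0$ at a fixed time $t_0$.

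The gap is in (A2). You correctly flag it as the hard step, but the workaround you propose (control $\mu_s(-L\11_E)$ through the non-linear inequality, then regenerate via Doeblin on blocks $[kt_0,(k+1)t_0]$, or study $\mu\mapsto\mu(V)+\kappa$) does not go through as stated: from~\eqref{eq:Lyap} one gets $\mu(V)\,\mu(-L\11_E)\leq A-B\mu(W)-\mu(LV)$, and since $LV$ is only assumed bounded from \emph{above}, $-\mu(LV)$ has no useful upper bound; moreover $\mu(V)$ can be arbitrarily small. So the non-linear inequality does not yield an upper bound on the instantaneous extinction rate, and your iteration never starts.

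The trick you are missing is that the non-linear inequality \emph{contains a linear one}. Taking $\mu=\delta_x$ in~\eqref{eq:Lyap} and using $V\geq 0$, $L\11_E\leq 0$ gives the pointwise Foster--Lyapunov bound
\[
LV(x)\leq A-BW(x),\qquad x\in E.
\]
From here (A2) follows by a completely classical route: since $W$ is norm-like, Dynkin's formula up to $\tau_K\wedge\tau_\d$ gives $\sup_{x\notin K}\E_x(\tau_K\wedge\tau_\d)\leq \varepsilon$ for $K$ finite and large, hence $\sup_x\P_x(\tau_K\wedge\tau_\d\geq n)\leq\varepsilon^n$ and $\sup_x\E_x e^{q_{x_0}(\tau_K\wedge\tau_\d)}<\infty$. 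Then splitting $\P_x(t<\tau_\d)$ according to $\{\tau_K\wedge\tau_\d>t\}$ and $\{\tau_K\wedge\tau_\d\leq t\}$, applying the strong Markov property at $\tau_K$, and using the elementary comparison $\P_{x_0}(t-s<\tau_\d)\leq e^{q_{x_0}s}\P_{x_0}(t<\tau_\d)$ (the process can sit at $x_0$ for time $s$) plus irreducibility on the finite set $K$, yields $\P_x(t<\tau_\d)\leq C'\P_{x_0}(t<\tau_\d)$ uniformly in $x$ and $t$. This is exactly the paper's argument; the essential point is that (A2) uses only the \emph{linear} consequence $LV\leq A-BW$ of~\eqref{eq:Lyap}, not its full non-linear strength.
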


The question of existence of a quasi-stationary distribution for similar models can be tackled with the theory of $R$-positive
matrices~\cite{FerrariKestenMartinez1996,gosselin-01}. However, these results do not give uniqueness of the quasi-stationary
distribution, nor the uniform convergence of conditional distributions to the quasi-stationary distribution. In particular, they do
not provide all the properties deduced from~\eqref{eq:conv} in~\cite{ChampagnatVillemonais2016U,ChampagnatVillemonais2014} and
mentioned in the introduction. This is also an important issue for applications, since the initial distribution of the population is
usually unknown and, in cases where there is no uniform convergence, the convergence rate of conditional distributions usually highly
depends on the tail of the initial distribution (see the discussion in~\cite{MV12}).

The proof of Theorem~\ref{thm:Lyap} is based on the next Proposition.

\begin{prop}
  \label{prop:mu_t}
  Fix $x\in E$ and let $\mu_t(\cdot)=\PP_x(X_t\in\cdot\mid t<\tau_\d)$. Let $V:E\rightarrow\RR_+$ be a bounded function such that
  $LV$ is bounded from above. Then, for all $t\geq 0$,
  \begin{equation}
    \label{eq:prop-mu_t}
    \mu_t(V)=V(x)+\int_0^t\Big[\mu_s(LV)-\mu_s(V)\mu_s(L\mathbbm{1}_E)\Big]\,ds,    
  \end{equation}
  where the value of the integral in the r.h.s.\ is well-defined since, for all $t\geq 0$,
  $$
  \mu_s(LV)-\mu_s(V)\mu_s(L\mathbbm{1}_E)\in L^1([0,t]).
  $$
\end{prop}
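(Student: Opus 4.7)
The strategy is to write $\mu_t(V) = f(t)/g(t)$ where
\[
  f(t) := \E_x\!\left[V(X_t)\11_{X_t\in E}\right], \qquad g(t) := \P_x(t<\tau_\d),
\]
after extending $V$ by $V(\d):=0$. A Dynkin-type argument on the pure-jump process $X$ should yield that $f$ and $g$ are absolutely continuous with
\[
  f'(t) = g(t)\,\mu_t(LV), \qquad g'(t) = g(t)\,\mu_t(L\11_E),
\]
where $LV$ refers to the full generator applied to the $\d$-extension of $V$. A straightforward quotient rule then gives
\[
  \frac{d}{dt}\mu_t(V) = \frac{f'(t)}{g(t)} - \frac{f(t)}{g(t)}\cdot\frac{g'(t)}{g(t)} = \mu_t(LV) - \mu_t(V)\,\mu_t(L\11_E),
\]
and integrating over $[0,t]$ produces~\eqref{eq:prop-mu_t}. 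The real work lies in justifying the two Dynkin identities under the weak hypotheses: $V$ is merely bounded, $LV$ is only bounded from above, and the absorption rate (and hence $L\11_E$) may be unbounded.

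For this I would use localisation. Fix an increasing sequence $(K_n)$ of finite subsets of $E$ with $\bigcup_n K_n = E$, and let $T_n := \inf\{t\geq 0 : X_t\notin K_n\}$; since $X$ is non-explosive on $E$ (as it is well-defined) and $T_n$ is dominated by the $n$-th jump time of $X$, $T_n\uparrow\infty$ almost surely. On $[0,t\wedge T_n]$ the process visits only finitely many states, all rates involved are bounded, and the classical Dynkin identity for a pure-jump Markov chain applies to both of the bounded functions $V$ (extended by $0$) and $\11_E$, giving for instance
\[
  \E_x\!\left[V(X_{t\wedge T_n})\11_{X_{t\wedge T_n}\in E}\right] = V(x) + \E_x\!\left[\int_0^{t\wedge T_n}LV(X_s)\,\11_{X_s\in E}\,ds\right].
\]
To pass to the limit $n\to\infty$ I would split the integrand as $LV = (LV)^+ - (LV)^-$. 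The assumption that $LV$ is bounded from above bounds $(LV)^+$ uniformly, so dominated convergence handles the positive part of the integral. The negative part is nonnegative and the integral over $[0,t\wedge T_n]$ is monotone-increasing in $n$, so monotone convergence handles it. The left-hand side is bounded by $\|V\|_\infty$ and converges pointwise to $V(X_t)\11_{t<\tau_\d}$, hence by dominated convergence to $f(t)$; identifying the limits produces both the desired integral equation for $f$ and the finiteness $\int_0^t\E_x\!\left[(LV)^-(X_s)\11_{X_s\in E}\right]ds<\infty$. The same recipe with $V\equiv 1$ produces the identity for $g$.

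To conclude, the irreducibility assumption~\eqref{hyp:Lyap} and the Markov property applied at integer times imply $g(t)>0$ for every $t\geq 0$. Combined with absolute continuity of $f$ and $g$, the ratio $\mu_t(V)=f(t)/g(t)$ is absolutely continuous on any $[0,t]$, and its a.e.\ derivative is given by the quotient rule above, whose integral yields~\eqref{eq:prop-mu_t}. The $L^1([0,t])$ integrability claim follows from the finiteness bounds obtained above together with boundedness of $V$. The main obstacle is precisely the one-sided control of $LV$ combined with the possibly unbounded absorption rate; the crucial observation is that this asymmetry is exactly what is needed to place dominated convergence on the bounded (positive) side of the integrand and reserve monotone convergence for the unbounded (negative) side.
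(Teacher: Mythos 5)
Your argument is correct and is essentially the paper's own proof: localisation by the exit times of an exhausting sequence of finite sets, Dynkin's formula for the stopped (bounded-generator) process, use of the one-sided bound on $LV$ to pass to the limit (the paper applies reverse Fatou and then dominated convergence where you split $LV$ into its positive and negative parts and use dominated plus monotone convergence), and finally the quotient-rule/absolute-continuity step (which the paper phrases through $W^{1,1}$ properties cited from Brezis, with $g$ bounded below on $[0,T]$ by $\P_x(T<\tau_\d)>0$). One small correction: the claim $T_n\uparrow\infty$ a.s.\ is false, since $T_n\le\tau_\d<\infty$ a.s.; what you actually need---and what your pointwise-limit statements implicitly use---is that, by non-explosion, the pre-absorption trajectory visits only finitely many states, so a.s.\ $T_n=\tau_\d$ for all $n$ large enough, whence $V(X_{t\wedge T_n})\11_{X_{t\wedge T_n}\in E}\to V(X_t)\11_{t<\tau_\d}$ and $\int_0^{t\wedge T_n}\to\int_0^{t\wedge\tau_\d}$ as required.
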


\begin{proof}
  Fix $(K_n)_{n\geq 1}$ an increasing sequence of finite subsets of $E$ such that $\cup_n K_n=E$, and for all $n\geq 1$, let
  $\tau_n:=\inf\{t\geq 0:X_t\not\in K_n\}$. We define $X^n$ as the process $X$ stopped at time $\tau_n$, and denote by $L^n$ its
  infinitesimal generator, given by $L^n f(x)=Lf(x)\mathbbm{1}_{x\in K_n}$. In particular, $L^n V$ is bounded and $V$ belongs to the
  domain of the (standard) infinitesimal generator of $X^n$, hence and Dynkin's formula applies and entails
  \begin{equation}
    \label{eq:prop1-0}
    \EE_x V(X^n_t)=V(x)+\int_0^t\EE_x\left[L^nV(X^n_s)\right]\,ds,
  \end{equation}
  with the convention that $V(\d)=0$. Letting $n\rightarrow+\infty$, Lebesgue's theorem applied to the left-hand side and Fatou's
  lemma applied to the right-hand side imply that
  \begin{equation*}
    \EE_x V(X_t)\leq V(x)+\int_0^t\EE_x\left[LV(X_s)\right]\,ds.
  \end{equation*}
  Since $V\geq 0$ and $LV$ is bounded from above, we deduce that $\EE_x LV(X_s)\in L^1([0,t])$. Therefore, using the equality
  $L^nV(X^n_s)=\mathbbm{1}_{s<\tau_n}LV(X_s)$, we can actually apply Lebesgue's Theorem to the right-hand side of~\eqref{eq:prop1-0}
  and hence
  \begin{equation}
    \label{eq:prop1}
    \EE_x V(X_t)=V(x)+\int_0^t\EE_x\left[LV(X_s)\right]\,ds.
  \end{equation}
  The same argument applies to $\11_E$, hence
  $$
  \PP_x(t<\tau_\d)=1+\int_0^t\EE_x\left[L\mathbbm{1}_E(X_s)\right]\,ds.
  $$
  Therefore (cf.\ e.g.~\cite[Thm.~VIII.2 and Lem.~VIII.2]{brezis}), for all $T>0$, $t\mapsto \EE_x V(X_t)$ and $t\mapsto
  \PP_x(t<\tau_\d)$ belong to the Sobolev space $W^{1,1}([0,T])$ (the set of functions from $[0,T]$ to $\mathbb{R}$ in $L^1$
  admitting a derivative in the sense of distributions in $L^1$). Since $\P_x(T<\tau_\d)>0$, we deduce from standard properties of
  $W^{1,1}$ functions~\cite[Cor.~VIII.9 and Cor.~VIII.10]{brezis} that $t\mapsto \EE_x V(X_t)/\PP_x(t<\tau_\d)=\mu_t(V)$ belongs to
  $W^{1,1}$ and admits as derivative
  $$
  t\mapsto\frac{\EE_x LV(X_t)}{\PP_x(t<\tau_\d)}-\EE_x V(X_t)\frac{\EE_x L\11_E(X_t)}{\PP_x(t<\tau_\d)^2}=
  \mu_t(LV)-\mu_t(V)\mu_t(L\mathbbm{1}_E)\in L^1([0,T]).
  $$
  Hence we have proved~\eqref{eq:prop-mu_t}.
\end{proof}

\begin{proof}[Proof of Theorem~\ref{thm:Lyap}]
The proof is divided into two steps, consisting in proving that the two conditions (A1) and (A2) below hold true.
It has been proved in~\cite{ChampagnatVillemonais2014} that (A1) and (A2) imply~\eqref{eq:cv-expo}.

There exists a probability measure $\nu$ on $E$ such that
\begin{itemize}
\item[(A1)] there exist $t_0,c_1>0$ such that for all $x\in E$,
  $$
  \PP_x(X_{t_0}\in\cdot\mid t_0<\tau_\d)\geq c_1\nu(\cdot);
  $$
\item[(A2)] there exists $c_2>0$ such that for all $x\in E$ and $t\geq 0$,
  $$
  \PP_\nu(t<\tau_\d)\geq c_2\PP_x(t<\tau_\d).
  $$
\end{itemize}

\noindent\textit{Step 1: Proof of~(A1).}\\
Assume that $X_0=x$ and defne $\mu_t$ as in Prop~\ref{prop:mu_t}. Assumption~\eqref{eq:Lyap} and Prop.~\ref{prop:mu_t} imply that
$$
\mu_t(V)\leq \|V\|_\infty+At-B\int_0^t \mu_s(W)\,ds.
$$
Since $V\geq 0$, there exist $s_0\leq \|V\|_\infty/A$ such that
$$
\mu_{s_0}(W)\leq 2A/B.
$$
Let us define the set $K_0= \{x\in E:W(x)< 4A/B\}$ which is finite since $W$ is norm-like. Using the previous inequality and Markov's
inequality, we obtain that $\mu_{s_0}(K_0)\geq 1/2$.

Fix $x_0\in K_0$. The minimum 
$$
p=\min_{x\in K_0} \P_x\{X_u=x_0,\ \forall u\in[\|V\|_\infty/A, 2 \|V\|_\infty/A)\}
$$
is positive because $K_0$ is finite, $X$ is irreducible away from $\d$ and the jumping rate from $x_0$ is finite. Hence
\begin{align*}
\mu_{s_0}\Big(\P_\cdot\{X_u=x_0,\ \forall u\in[\|V\|_\infty/A, 2 \|V\|_\infty/A)\}\Big)\geq \frac{p}{2} >0,
\end{align*}
  Using the Markov property, we deduce that
\begin{align*}
\mu_{2 \|V\|_\infty/A}\{x_0\}&=\frac{\E_x[\11_{s_0<\tau_\d}\P_{X_{s_0}}(X_{2 \|V\|_\infty/A-s_0}=x_0)]}{\P_x(2 \|V\|_\infty/A<\tau_\d)} \\ &
=\mu_{s_0}(\P_\cdot(X_{2 \|V\|_\infty/A-s_0}=x_0))\frac{\P_x(s_0<\tau_\d)}{\P_x(2 \|V\|_\infty/A<\tau_\d)}\\
&\geq \mu_s\Big(\P_\cdot\{X_u=x_0,\ \forall u\in[\|V\|_\infty/A, 2 \|V\|_\infty/A)\}\Big)\\
&\geq \frac{p}{2}>0.
\end{align*}
Note that $s_0$ may depend on the initial value $x$ of the process, but since $p$ does not depend on $x$, we have indeed proved that
(A1) is satisfied with $\nu=\delta_{x_0}$, $t_0=2 \|V\|_\infty/A$ and $c_1=p/2$.

\medskip
\noindent\textit{Step 2: Proof of~(A2).}\\
Since $L\11_E\leq 0$,~\eqref{eq:Lyap} applied to $\mu=\delta_x$ implies that
\begin{align*}
  LV(x)\leq A-BW(x),\quad\forall x\in E.
\end{align*}
Hence, using the same argument as for the proof of~\eqref{eq:prop1}, for all finite subset $K$ of $E$, denoting by $\tau_{K}$ the
first hitting time of $K$ by the process $X$, we have for all $x\in E\setminus K$
\begin{equation*}
  \EE_x V(X_{t\wedge\tau_K})=V(x)+\EE_x\left[\int_0^{t\wedge\tau_K}LV(X_s)\,ds\right],
\end{equation*}
with the convention that $V(\d)=0$. We deduce that
$$
0\leq\E_x[V(X_{\tau_{K}\wedge\tau_\d})]\leq \|V\|_\infty+\left(A-B\inf_{y\not\in K}W(y)\right)\E_x(\tau_K\wedge\tau_\d).
$$
Since $K$ is arbitrary and $W$ is norm-like, for all $\varepsilon>0$, there exists $K\subset E$ finite such that
$\E_x(\tau_{K}\wedge\tau_\d)\leq\varepsilon$ for all $x\in E\setminus K$. Hence Markov's inequality implies that
$$
\sup_{x\in E\setminus K}\P_x(\tau_{K}\wedge\tau_\d\geq 1)\leq\varepsilon,
$$
and Markov's property then entails
$$
\sup_{x\in E\setminus K}\P_x(\tau_{K}\wedge\tau_\d\geq n)\leq\varepsilon^n,\quad\forall n\in\mathbb{N}.
$$
Since $\varepsilon$ was arbitrary, we have proved that there exists a finite $K\subset E$ such that
\begin{equation*}
  M:=\sup_{x\in E\setminus K}\E_x[\exp(q_{x_0}(\tau_{K}\wedge\tau_\d))]<\infty, 
\end{equation*}
where $x-0$ was fixed in Step 1 and $q_{x_0}=\sum_{y\neq x_0}q_{x_0,y}$ is the total jump rate of the process $X$ from $x_0$. We may
(and will) assume without loss of generality that $x_0\in K$.
The irreducibility of $X$ and the finiteness of $K$ entail the existence of a constant $C>0$ independent of $t$ such that
\begin{equation*}
  \sup_{x\in K}\P_x(t<\tau_\d)\leq C\inf_{x\in K}\P_x(t<\tau_\d)\leq C\P_{x_0}(t<\tau_\d),\quad\forall t\geq 0.
\end{equation*}
By definition of $\lambda$ and by the Markov property, for all $0\leq s\leq t$,
\begin{align*}
  e^{-q_{x_0} s} \P_{x_0}(t-s<\tau_\d)& =\P_{x_0}(X_u=x_0,\forall u\in [0,s])\P_{x_0}(t-s<\tau_\d)\\
  &\leq \P_{x_0}(t<\tau_\d).
\end{align*}

Using the last three inequalities and the strong Markov property, we have for all $x\in E$,
\begin{align*}
  \P_x(t<\tau_\d) & =\P_x(t<\tau_K\wedge\tau_\d)+\P_x(\tau_K\wedge\tau_\d\leq t<\tau_\d)\\
  &\leq Me^{-q_{x_0} t}+\int_0^t \sup_{y\in K\cup\{\d\}}\P_y(t-s<\tau_\d)\P_y(\tau_K\wedge\tau_\d\in ds)\\
  &\leq Me^{-q_{x_0} t}+ C\int_0^t \P_{x_0}(t-s<\tau_\d)\P_x(\tau_K\wedge\tau_\d\in ds) \\
  &\leq M\PP_{x_0}(t<\tau_\d)+ C\P_{x_0}(t<\tau_\d)\int_0^t e^{q_{x_0} s}\P_x(\tau_K\wedge\tau_\d\in ds)\\
  &\leq M(1+C)\P_{x_0}(t<\tau_\d).
\end{align*}
This entails (A2) for $\nu=\delta_{x_0}$. 
\end{proof}

\section{One-dimensional birth and death processes with unbounded catastrophe rates}
\label{sec:1D}

The quasi-stationary behaviour of one-dimensional birth and
death processes with bounded catastrophe rates has been  studied
in~\cite{vanDoorn2012,ChampagnatVillemonais2014}. We study in this section the implication of the non-linear Lyapunov criterion of Theorem~\ref{thm:Lyap} on  birth and death processes with possibly unbounded catastrophe rates.

The Markov process $(X_n,n\geq 0)$ in $E\cup\{\d\}$ with $E=\mathbb{N}:=\{1,2,\ldots\}$ and $\d=0$ is a birth and death process with
catastrophe if $0$ is absorbing and if it jumps from state $k\in E$ to $k+1$ at rate $b_k$; to $k-1$ at rate $b_k$ and to $0$
at rate $a_k$ for some positive sequences $(b_k)_{k\geq 1}$ and $(d_k)_{k\geq 1}$ and a nonnegative sequence $(a_k)_{k\geq
  1}$. In other words, for all $i\neq j$ in $\mathbb{Z}_+$,
$$
q_{i,j}=
\begin{cases}
  b_i & \mbox{if }j=i+1\mbox{ and }i\geq 1, \\
   d_i & \mbox{if }j=i-1\mbox{ and }i\geq 2, \\ 
   a_i & \mbox{if }j=0\mbox{ and }i\geq 2, \\
   a_1+ d_1 & \mbox{if }j=0\mbox{ and }i=1, \\ 
  0 & \mbox{otherwise}.
\end{cases}
$$

We assume that
\begin{equation}
  \label{eq:S}
  S:=\sum_{n\geq
    1}\left(\frac{1}{ d_n}+\frac{b_n}{ d_n d_{n+1}}+\ldots+\frac{b_n\ldots b_{n+k}}{ d_n\ldots d_{n+k+1}}
    +\ldots\right)<+\infty.
\end{equation}
We may write $S$ as
$$
S=\sum_{n\geq 1}\frac{1}{ d_n\pi_n}\sum_{k\geq n}\pi_k=\sum_{k\geq 1}\pi_k\sum_{n=1}^k\frac{1}{ d_n\pi_n},
$$
where $\pi_1=1$ and, for all $k\geq 2$,
$$
\pi_k=\frac{b_1\ldots b_{k-1}}{ d_1\ldots d_k}.
$$
This condition implies that the birth and death process without catastrophe ($a_k=0$ for all $k$) is well-defined, and is equivalent
to the fact that the process comes down from infinity (i.e.\ $+\infty$ is an entrance
boundary)~\cite{bansaye-meleard-richard-15,anderson-91}.

\begin{thm}
  \label{thm:PNM-1D}
  Under the assumption~\eqref{eq:S}, consider a nondecreasing and unbounded function $W:\mathbb{N}\rightarrow\mathbb{R}_+$ such that
  \begin{equation}
    \label{eq:condition-W}
    \sum_{n\geq 1}\frac{1}{ d_n\pi_n}\sum_{k\geq n}W(k)\pi_k=\sum_{k\geq 1}W(k)\pi_k\sum_{n=1}^k\frac{1}{ d_n\pi_n}<+\infty.
  \end{equation}
  If $ a_k=\kappa_k+o(W(k))$ when $k\rightarrow+\infty$ where $(\kappa_k)_{k\geq 1}$ is any non-decreasing sequence,
  then~\eqref{eq:Lyap} is satisfied for the function $W$ and
  $$
  V(x)=\sum_{n=1}^{x+1}\frac{1}{ d_n\pi_n}\sum_{k\geq n}W(k)\pi_k.
  $$
  In particular, the conclusion of Theorem~\ref{thm:Lyap} holds true.
\end{thm}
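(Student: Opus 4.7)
My plan is to verify the hypotheses of Theorem~\ref{thm:Lyap} for the specific $V$ and $W$ in the statement. Boundedness of $V$ is exactly what condition~\eqref{eq:condition-W} asserts, the norm-like property of $W$ is immediate from monotonicity and unboundedness, and~\eqref{hyp:Lyap} is automatic for an irreducible birth-and-death process with positive rates. Hence the only real task is to establish~\eqref{eq:Lyap}.

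The first computational step is to evaluate $LV$. Setting $f(n) = \frac{1}{d_n\pi_n}\sum_{k\geq n}W(k)\pi_k$, so that the increments $V(x)-V(x-1)$ are essentially $f(x)$, I would use the classical Poisson-type identity $b_n f(n+1) - d_n f(n) = -W(n)$, which reflects that the associated potential-theoretic function satisfies $L_0 u = -W$ with $u(0)=0$, where $L_0$ is the catastrophe-free generator. With the convention $V(\partial)=0$ and careful bookkeeping at state $1$ (where the absorption rate is $a_1+d_1$), this yields an identity of the clean form $LV(x) = -W(x) - a_x V(x)$, which is bounded from above by $0$. At the same time, $L\mathbbm{1}_E(x) = -q_{x,\partial}$ equals $-a_x$ for $x \geq 2$ and $-(a_1+d_1)$ for $x=1$.

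Substituting these into the left-hand side of~\eqref{eq:Lyap} and writing $\bar V := \mu(V)$, the terms proportional to $a_x V(x)$ (in $\mu(LV)$) and to $\bar V a_x$ (coming from $-\mu(V)\mu(L\mathbbm{1}_E)$) combine into a covariance, giving
\[
\mu(LV) - \mu(V)\mu(L\mathbbm{1}_E) = -\mu(W) - \mathrm{Cov}_\mu(a,V) + d_1\mu(\{1\})\bar V,
\]
the last term being bounded by the constant $d_1\|V\|_\infty$. The heart of the proof is to bound the covariance. Decomposing $a_k = \kappa_k + \epsilon_k$ as in the hypothesis, and observing that $V$ is non-decreasing (as a partial sum of non-negative $f(n)$'s), Chebyshev's sum inequality applied to the two non-decreasing functions $\kappa$ and $V$ yields $\mathrm{Cov}_\mu(\kappa,V)\geq 0$. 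The residual $|\mathrm{Cov}_\mu(\epsilon,V)| \leq \|V\|_\infty \mu(|\epsilon|)$ is then split via the assumption $\epsilon_k = o(W(k))$: for any $\eta>0$ there is $N$ such that $|\epsilon_k| \leq \eta W(k)$ for $k \geq N$, so that $\mu(|\epsilon|) \leq \max_{k<N}|\epsilon_k| + \eta \mu(W)$. Choosing $\eta < 1/\|V\|_\infty$ absorbs a fraction of $\mu(W)$ and produces~\eqref{eq:Lyap} with $B = 1 - \eta\|V\|_\infty > 0$ and $A$ a finite constant.

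The main obstacle is the Chebyshev step: without the monotonicity of $\kappa$, the term $-\mathrm{Cov}_\mu(a,V)$ could be as large as $\|V\|_\infty \mu(a)$ and would not be dominated by $\mu(W)$. It is precisely the non-linear subtraction of $\mu(V)\mu(L\mathbbm{1}_E)$ in~\eqref{eq:Lyap}---absent in a purely linear Lyapunov criterion---that turns the estimate into a rearrangement inequality, which is why the monotonicity of $\kappa$ together with the integrability condition~\eqref{eq:condition-W} on $W$ are the natural structural assumptions in this setting.
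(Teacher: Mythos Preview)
Your proposal is correct and follows essentially the same route as the paper's proof. Both compute $L_0V=-W$ (your form $LV=-W-a\,V$ is the same identity once one writes $L=L_0-a\cdot$), substitute into the nonlinear Lyapunov inequality, use the FKG/Chebyshev correlation inequality for the monotone pair $(\kappa,V)$ to discard $-\mathrm{Cov}_\mu(\kappa,V)$, and absorb the $o(W)$ remainder into $-\mu(W)$; your explicit $\eta$--$N$ splitting of the $o(W)$ term is in fact slightly more careful than the paper's shorthand ``$-\mu(W+o(W))$''.
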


\begin{rem}
  \begin{enumerate}
  \item Since $S<\infty$, De la Vall\'ee-Poussin's classical result entails that we can always find an unbounded non-decreasing
    function $W$ satisfying~\eqref{eq:condition-W}.
  \item In particular, we can always find $\widetilde{W}$ satisfying the same condition and such that $W(k)=o(\widetilde{W}(k))$.
    Therefore, if we relax our assumption on $ a$ as $ a_k=\kappa_k+O(W(k))$, the conclusion of Theorem~\ref{thm:Lyap} will still
    hold true.
  \item Note that no assumption of boundedness or limited growth of $\kappa_k$ as $k\rightarrow+\infty$ is required.
  \item In fact, the assumption $ a_k=\kappa_k+O(W(k))$ concerns the fluctuations of the sequence $(a_k)_{k\in\N}$. More precisely, setting
    $$
    \kappa^+_k=\sup_{\ell\leq k} a_\ell.
    $$
    and
    $$
    \kappa^-_k=\inf_{\ell\geq k} a_\ell,
    $$
    one easily obtains the following corollary.
  \end{enumerate}
\end{rem}

\begin{cor}
  \label{cor:PNM-1D}
  Under the assumptions~\eqref{eq:S} and
  \begin{equation}
    \sum_{k\geq 1}(\kappa^+_k-\kappa^-_k)\pi_k\sum_{n=1}^k\frac{1}{ d_n\pi_n}<+\infty,
  \end{equation}
  the conclusion of Theorem~\ref{thm:Lyap} holds true.
\end{cor}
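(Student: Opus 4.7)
The plan is to deduce the corollary by applying Theorem~\ref{thm:PNM-1D} with the pair $(\kappa_k)_{k\geq 1}$ and $W$ dictated by the hypothesis. Set $\kappa_k:=\kappa^-_k=\inf_{\ell\geq k}a_\ell$; this sequence is nondecreasing in $k$ because the set $\{\ell\geq k\}$ over which the infimum is taken shrinks as $k$ grows. Combined with the defining inequalities $\kappa^-_k\leq a_k\leq \kappa^+_k$, this yields the pointwise bound $0\leq a_k-\kappa_k\leq \kappa^+_k-\kappa^-_k$ for every $k$.

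For $W$, I would apply De la Vall\'ee-Poussin's theorem to the summable sequence $(\pi_k\sum_{n=1}^k(d_n\pi_n)^{-1})_{k\geq 1}$ (summable by~\eqref{eq:S}) to obtain an unbounded nondecreasing $h:\mathbb{N}\to[1,+\infty)$ satisfying
\begin{equation*}
\sum_{k\geq 1}h(k)\,\pi_k\sum_{n=1}^k\frac{1}{d_n\pi_n}<+\infty,
\end{equation*}
and then set $W(k):=(\kappa^+_k-\kappa^-_k)+h(k)$. This $W$ is norm-like because $W\geq h\to+\infty$; it satisfies~\eqref{eq:condition-W}, since adding the corollary's summability hypothesis for $\kappa^+_\cdot-\kappa^-_\cdot$ to the one for $h$ keeps the sum finite; and $|a_k-\kappa_k|\leq \kappa^+_k-\kappa^-_k\leq W(k)$, so that $a_k=\kappa_k+O(W(k))$ with $(\kappa_k)$ nondecreasing. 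Theorem~\ref{thm:PNM-1D}, in the form allowing $O(W)$ in place of $o(W)$ as pointed out in Remark~2 of the preceding block, then yields the conclusion.

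The main obstacle is that this $W$ need not be nondecreasing, whereas Theorem~\ref{thm:PNM-1D} is stated under that monotonicity hypothesis. Inspection of the proof of that theorem shows, however, that the monotonicity of $W$ is inessential: the Lyapunov function $V(x)=\sum_{n=1}^{x+1}(d_n\pi_n)^{-1}\sum_{k\geq n}W(k)\pi_k$ is automatically nondecreasing in $x$ as soon as $W\geq 0$, and the FKG-type covariance inequality governing the computation relies only on the monotonicity of $\kappa$ and $V$. An alternative route would be to replace $W$ by a genuinely nondecreasing majorant such as $\max_{j\leq k}(\kappa^+_j-\kappa^-_j)+h(k)$, but controlling the summability of such an envelope against $\pi_k\sum_{n=1}^k(d_n\pi_n)^{-1}$ purely from the corollary's hypothesis is a delicate point that the approach above sidesteps.
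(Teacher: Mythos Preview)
Your argument is correct and matches the approach the paper intends: the paper does not spell out a proof of the corollary but simply states, at the end of the preceding remark, that setting $\kappa^\pm_k$ as indicated ``one easily obtains'' the result, so the route via $\kappa_k=\kappa^-_k$ and Theorem~\ref{thm:PNM-1D} (in its $O(W)$ form) is exactly what is meant.

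You actually go further than the paper by flagging and resolving a genuine technical point: the candidate $W(k)=(\kappa^+_k-\kappa^-_k)+h(k)$ need not be nondecreasing (e.g.\ $a=(10,0,10,10,\ldots)$ gives $\kappa^+_k-\kappa^-_k=(10,10,0,0,\ldots)$), whereas Theorem~\ref{thm:PNM-1D} is stated for nondecreasing $W$. Your inspection of the proof is accurate: the computation $L_0V=-W$ is purely algebraic, the boundedness of $V$ follows from~\eqref{eq:condition-W}, the monotonicity of $V$ needed for the FKG step comes from $W\geq 0$ alone, and norm-likeness of $W$ (which is what Theorem~\ref{thm:Lyap} actually requires) is ensured by $W\geq h\to+\infty$. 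So dropping the monotonicity hypothesis on $W$ causes no harm, and your derivation goes through.
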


\begin{exa}
  \label{ex:exa-log-1D}
  The logistic birth and death process is commonly used in biological applications as a population model with competition or limited
  carrying capacity. A jump from a state $k\geq 2$ to $0$ corresponds to a catastrophic event where the whole population is
  annihilated. The birth and death rates are given here by $b_k=bk$ and $ d_k=dk+ck(k-1)$ for some $b,c,d>0$. It is well-known that
  $S<\infty$ in this case. More precisely, we have
  $$
  u_n:=\frac{1}{ d_n\pi_n}=\frac{d(d+c)\ldots(d+c(n-2))}{b^{n-1}}.
  $$
  Since the sequence $(u_n)_{n\geq 1}$ is strongly diverging to $+\infty$, $\sum_{n=1}^k u_n$ is equivalent to its last term $u_k$
  when $k\rightarrow+\infty$. This can be proved as follows: for all $\varepsilon>0$, there exists $n_0$ such that, for all $n\geq
  n_0$, $u_n\leq \varepsilon u_{n+1}$. Hence $u_n\leq u_k\varepsilon^{k-n}$ for all $n_0\leq n\leq k$ and
  $$
  u_k\leq \sum_{n=1}^ku_n\leq \sum_{n=1}^{n_0}u_n+u_k\sum_{n=n_0+1}^k \varepsilon^{k-n}\leq
  \sum_{n=1}^{n_0}u_n+\frac{u_k}{1-\varepsilon},
  $$
  which yields $\sum_{n=1}^k u_n\sim u_k$ when $k\rightarrow=\infty$. Therefore,
  $$
  \pi_k \sum_{n=1}^k \frac{1}{ d_n\pi_n}\sim \frac{\pi_k}{ d_k\pi_k}= d_k\sim\frac{1}{ck^2}
  $$
  when $k\rightarrow+\infty$. Hence we can choose in Theorem~\ref{thm:PNM-1D} any norm-like function $W:E\rightarrow\RR_+$ such that
  $\sum_{k\geq 1}\frac{W(k)}{k^2}<\infty$. For example, the conclusions of Theorem~\ref{thm:Lyap} are true if $| a_k-\kappa_k|=
  O(k^{1-\varepsilon})$ for some $\varepsilon>0$ and some non-decreasing non-negative sequence $(\kappa_k)_{k\in\N}$.
\end{exa}

\begin{rem}
\label{rem:arg-log-gener}
  Note that the argument in the last example would work for any birth and death process which comes down from infinity and such that
  $b_k=o( d_k)$, so that any function $W$ such that $\sum_{k\geq 1}W(k)/ d_k<\infty$ would satisfy the conditions of
  Theorem~\ref{thm:Lyap}.
\end{rem}

\begin{exa}
  We consider now an example where $b_k\neq o( d_k)$ and, more precisely, the case where the birth and death process $X$ without
  catastrophe is a local martingale, i.e.\ $b_k=d_k$ for all $k\geq 1$. In this case,
  $$
  \pi_k \sum_{n=1}^k \frac{1}{ d_n\pi_n}=k\pi_k=\frac{k}{ d_k}.
  $$
  Hence the birth and death process comes down from infinity if and only if $\sum_{k\geq 1}k/ d_k<\infty$ and any norm-like function
  $W$ such that $\sum_{k\geq 1}kW(k)/ d_k$ would satisfy the conditions of Theorem~\ref{thm:Lyap}. For example, if $b_k= d_k=k^3$,
  the conclusions of Theorem~\ref{thm:Lyap} are true if $| a_k-\kappa_k|=O(k^{1-\varepsilon})$ for some $\varepsilon>0$ and some
  non-decreasing non-negative sequence $(\kappa_k)_{k\in\N}$.
\end{exa}

\begin{proof}[Proof of Theorem~\ref{thm:PNM-1D}]
  We observe that the generator of the birth and death process with catastrophes can be written, for all bounded
  $\phi:\mathbb{Z}_+\rightarrow\RR$ such that $\phi(0)=0$, as
  $$
  L\phi(x)=L_0\phi(x)- a_x\phi(x),
  $$
  where
  $$
  L_0\phi(x)=b_x[\phi(x+1)-\phi(x)]+ d_x[\phi(x-1)-\phi(x)].
  $$

  For all $x\in E$, we compute
  \begin{align*}
    L_0V(x) & =b_{x}\frac{1}{ d_{x+1}\pi_{x+1}}\sum_{k\geq x+1}W(k)\pi_k- d_{x}\frac{1}{ d_{x}\pi_{x}}\sum_{k\geq x}W(k)\pi_k
    \\ & =\frac{1}{\pi_x}\sum_{k\geq x+1}W(k)\pi_k-\frac{1}{\pi_x}\sum_{k\geq x}W(k)\pi_k
    \\ & =-W(x).
  \end{align*}
  Since $L_0\mathbbm{1}_E(x)= -d_1\mathbbm{1}_{x=1}$, $a-\kappa=o(W)$ and $V$ is bounded, we deduce that, for all probability measure
  $\mu$ on $E$,
  \begin{align*}
    \mu(LV)-\mu(L\11_E)\mu(V) & =\mu(L_0V)-\mu( a V)+ d_1\mu(\{1\})\mu(V)+\mu( a)\mu(V) \\
    & \leq-\mu(W+o(W))-\mu(\kappa V)+ d_1\|V\|_\infty+\mu(\kappa)\mu(V) \\
    & \leq-\mu(W+o(W))+ d_1\|V\|_\infty \\ & \leq A-\frac{1}{2}\mu(W)
  \end{align*}
  for some constant $A$, where the fact that $\mu(\kappa)\mu(V)\leq\mu(\kappa V)$ follows from FKG inequality since $V$ and $\kappa$
  are non-decreasing.
\end{proof}

\section{Multi-dimensional birth and death processes with unbounded extinction rate}
\label{sec:multi-dim}

\subsection{Domination by a one-dimensional birth and death process}

\label{sec:multi-d-dom}

It is easy to obtain multidimensional extensions of Theorem~\ref{thm:PNM-1D} provided that the total number of individuals is
dominated by a birth and death process which comes down from infinity. For example, let us consider a birth and death process
$(X_t,t\geq 0)$ in $\mathbb{Z}_+^r$, $r\geq 1$, with jump rate $  b_i(x)\geq 0$ from $x$ to $x+e_i$ (with $e_i=(0,\ldots,0,1,0\ldots,0)$ where
the 1 is at the $i$-th coordinate), death rate $d_i(x)\geq 0$ from $x$ to $x-e_i$ and catastrophe rate $ a(x)$, absorbed at
$\d=0$. 
Assume that there exist two positive functions $\bar{b}$ and $\underline{d}$ on $\mathbb{N}$  and a
nonnegative, nondecreasing function $\kappa$ on $\mathbb{N}$ such that
\begin{align}
\label{eq:bd-rate-majmin}
\sum_{i=1}^r  b_i(x)\leq\bar{  b}(|x|),\quad \sum_{i=1}^r d_i(x)\geq\underline{d}(|x|)
\end{align}
and
$$
a(x)=\kappa(|x|)+o(W(|x|))\quad\mbox{when }|x|\rightarrow+\infty,
$$
where $|x|=x_1+\ldots+x_n$ and where the function $W$ satisfies the conditions of Theorem~\ref{thm:PNM-1D}
for the one-dimensional birth and death process with birth rates $\bar{b}$ and death rates $\underline{d}$.

Then, the proof of Theorem~\ref{thm:PNM-1D} extends easily to this situation and hence the conclusion of Theorem~\ref{thm:Lyap} holds
true.

\begin{exa}
\label{exa:muli-dim-dom1}
Let us consider a multi-dimensional birth and death process whose only absorption point in $\d=(0,\ldots,0)$. More precisely, we
consider the multi-dimensional birth and death process with mutation such that there exist positive constants $\beta_i$, $\delta_i$,
$m_{ij}$ and $c_{ij}$ for all $i,j\in\{1,\ldots,r\}$ such that, for all $x=(x_1,\ldots,x_r)\in \Z_+^r\setminus\{0\}$,
\begin{align*}
b_i(x)&=\beta_i\,x_i+\sum_{j=1,j\neq i}^r m_{ij}\,x_j,\\
d_i(x)&=\delta_i\,x_i+c_{ii}x_i(x_i-1)+\sum_{j=1,j\neq i}^r c_{ij}\,x_ix_j.
\end{align*}
In this model, $\beta_i$ and $\delta_i$ represent respectively the individual birth and death rates of an individual of type $i$,
while $m_{ij}$ represents the individual birth rate of an individual of type $i$ due to the reproduction with mutation of an
individual of type $j$ and $c_{ij}$ represents the individual death rate of an individual of type $i$ due to the competition of an
individual of type $j$. As above, we denote by $a(x)$ the catastrophe rate of the process at point $x\in\Z_+^d$.

Setting $\bar{\beta}=\max_i \beta_i+\sum_{j\neq i}m_{ij}$, $\bar{\delta}=\min_i \delta_i$ and $\bar{c}=\min_{ij} c_{ij}$, we observe
that the condition~\eqref{eq:bd-rate-majmin} is satisfied for $\bar{b}(|x|)=\bar{\beta}|x|$ and
$\bar{d}(|x|)=\bar{\delta}|x|+\bar{c}|x|(|x|-1)$. Hence, using the conclusion of Example~\ref{ex:exa-log-1D}, we deduce that the
conclusion of Theorem~\ref{thm:Lyap} holds true as soon as $|a(x)-\kappa(|x|)|= O(|x|^{1-\varepsilon})$ for some $\varepsilon>0$ and
some non-decreasing non-negative function $\kappa$ on $\N$.
\end{exa}

\begin{exa}
\label{exa:muli-dim-dom2}
Let us now consider a situation where the process is absorbed when it hits $\partial:=\{x=(x_1,\ldots,x_r)\in\Z_+^r,\,\exists i\text{
  s.t. }x_i=0\}$ and which can only be absorbed from states in $\{x=(x_1,\ldots,x_r)\in\Z_+^r,\,\exists i\text{ s.t. }x_i=1\}$. This
corresponds to a multi-dimensional birth and death process without catastrophe nor mutation absorbed when one of the type disappears.
To be consistent with our notation, we actually take $E=\mathbb{N}^r=\{1,2,\ldots\}^r$ and $\d$ a point which does not belong to $E$,
and we assume that absorptions are due only to the rates $a(x)$ for $x$ such that $x_i=1$ for some $1\leq i\leq r$. More precisely,
we assume that there exist positive constants $\beta_i$, $\delta_i$ and $c_{ij}$ for all $i,j\in\{1,\ldots,r\}$ such that, for all
$x=(x_1,\ldots,x_r)\in \Z_+^r\setminus\{0\}$,
\begin{align*}
b_i(x)&=\beta_i\,x_i,\\
d_i(x)&=\11_{x_i\neq 1}\left(\delta_ix_i+c_{ii}x_i(x_i-1)+\sum_{j=1,j\neq i}^r c_{ij}\,x_ix_j\right),\\
a(x)&=a(x)\11_{\exists i\text{ s.t. }x_i=0}.
\end{align*}

Setting $\bar{\beta}=\max_i \beta_i$, $\bar{\delta}=a(1,\ldots,1)\wedge\inf_i\delta_i$ and $\bar{c}=\min_{ij} c_{ij}$, one can check
that the condition~\eqref{eq:bd-rate-majmin} is satisfied for $\bar{b}(|x|)=\bar{\beta}|x|$ and $
\bar{d}(|x|)=\bar{\delta}+\bar{c}|x|(|x|-r)_+$. Hence, using the same calculation as in Example~\ref{ex:exa-log-1D} (see also
Remark~\ref{rem:arg-log-gener}), we deduce that the conclusion of Theorem~\ref{thm:Lyap} holds true as soon as
$a(x)=O(|x|^{1-\varepsilon})$ for some $\varepsilon>0$.
\end{exa}

\subsection{Strong intra-specific competition cases}
\label{sec:multi-d-strong-intra}

The methods described in the previous subsection are particularly well suited to the study of a multi-type population conditioned to
global non-extinction, \textit{i.e.} conditioned to the survival of at least one type in the population. In the present subsection,
we are interested in the quasi-stationary behavior of a multi-type population process subject to catastrophic events and conditioned
to the survival of all the types in the population.

More precisely, we consider a multi-dimensional birth and death process $(X_t,t\geq 0)$ taking values in $\NN^r\cup\{\d\}$ for some
$r\geq 1$, absorbed at $\d$ and whose coefficients are given, for all $x=(x_1,\ldots,x_r)\in\NN^r$ by
\begin{align}
b_i(x)&=\beta_i(x)\,x_i, \notag \\
d_i(x)&=\11_{x_i\neq 1}\left(\delta_i(x)x_i+c_{ii}(x)\,x_i(x_i-1)+\sum_{j=1,j\neq i}^r c_{ij}(x)\,x_ix_j\right),\notag \\
a(x)&=\alpha(x)+\sum_{i=1}^r\11_{x_i=1}\left(\delta_i(x)+\sum_{j=1,j\neq i}^r c_{ij}(x)\,x_j\right), \label{eq:def-a}
\end{align}
where $\alpha$, $\beta_i$, $\delta_i$ and $c_{ij}$ are functions from $\NN^r$ to $\RR_+$. Similarly to
examples~\eqref{exa:muli-dim-dom1} and~\eqref{exa:muli-dim-dom2}, this is a model for a population where $\beta_i(x)$ and
$\delta_i(x)$ represent respectively the individual birth and death rates of an individual of type $i$ in population $x$, while
$c_{ij}(x)$ represents the individual death rate of an individual of type $i$ in a population $x$ due to the competition of an
individual of type $j$. Note that the value of $a(x)$ is divided into two parts : $\alpha(x)$ represents the rate of catastrophic
events annihilating the whole population, while the other part is the rate at which the process would jump to $\ZZ_+\setminus\NN_+$
due to an individual death. Both phenomena are gathered into only one jump rate $a(x)$ from $x$ to $0$ because we are only interested
in the behaviour of the process conditioned not to hit the boundary $\ZZ_+\setminus\NN_+$.

We emphasize that this natural setting leads to a new difficulty that cannot be handled using the methods of the previous subsection.
Indeed, beside the dependence on $x$ of all the individual rates, even in the case where $\beta_1$, $\delta_i$ and $c_{ij}$ are
independent of $x$, the growth of $a(x)$ due to the sum in~\eqref{eq:def-a} does not fit in the study of Examples~\ref{exa:muli-dim-dom1} and~\ref{exa:muli-dim-dom2}.

We will make the following natural assumption that there exist constants $\bar b$, $\bar d$ and $\underline{c}$ in $(0,\infty)$, such that, for all $n\in\NN^r$ and $i\in\{1,\ldots,r\}$,
    \begin{align}
    \label{eq:bounds-on-coeffs}
    0<\beta_i(x)\leq \bar \beta,\quad 0\leq \delta_i(x)\leq\bar \delta,\quad c_{ii}(x)\geq\underline{c}.
    \end{align}
Moreover, we make the following important assumption  which indicates that the intra-specific competition dominates the inter-specific competition. 
    
\paragraph{Assumption H1.} There exist $\eta\in(0,1)$ such that
    \begin{equation}
      \label{eq:hyp}
\sum_{1\leq j\neq k\leq r} c_{jk}(x)x_j|x| 
      \leq (1-\eta)\sum_{i=1}^r c_{ii}(x)x_i(x_i-1),\text{ for \ $|x|$ large enough.} 
    \end{equation}

In biology, this assumption is particularly relevant to model a multi-type population where individuals survive by consuming a type-specific resource. Indeed, in this situation, one would expect to have $c_{jk}(x) \ll c_{ii}(x)$, for all $j\neq k$, all $i\in\N^r$ and all $|x|$ large enough.

Let us introduce, for all $x\in\NN^r$,
\begin{align*}
\kappa^+(x)=\sup_{y\in\NN^r,|y|\leq |x|} \alpha(y)
\end{align*}
and
\begin{align*}
\kappa^-(x)=\inf_{y\in\NN^r,|y|\geq |x|} \alpha(y).
\end{align*}
We also set $\text{Osc}(\alpha)(x)=\kappa^+(x)-\kappa^-(x)$. The following assumption states that the oscillations of $\alpha$ are sub-linear at infinity. In particular, we make no assumption on the growth of $\alpha$ itself. Also, we emphasize that the following assumption is trivially fulfilled when $\alpha=0$.

\paragraph{Assumption H2.} There exists $\eta'\in(0,\eta)$ such that
\begin{align*}
\text{Osc}(\alpha)(x)=o(|x|^{\eta'})\text{ when $|x|\rightarrow+\infty$}.
\end{align*}

Note that the larger $\eta$ can be chosen, the weaker Assumption H2 is. In fact, if $\sum_{1\leq j\neq k\leq r}
c_{jk}(x)=o(c_{ii}(x))$ when $|x|\rightarrow+\infty$ for all $1\leq i\leq r$, we can take any $\eta'<1$ in Assumption H2.

We can now state the following result. 
\begin{thm}
  \label{thm:one}
  Assume that~\eqref{eq:bounds-on-coeffs},~H1 and~H2 hold, then the conclusion of Theorem~\ref{thm:Lyap} holds true.
\end{thm}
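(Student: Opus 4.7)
The plan is to apply Theorem~\ref{thm:Lyap}, so the task is to exhibit a bounded $V:\mathbb{N}^r\to\mathbb{R}_+$ and a norm-like $W:\mathbb{N}^r\to\mathbb{R}_+$ for which~\eqref{eq:Lyap} holds. Since the definition~\eqref{eq:def-a} concentrates every absorption transition into the single rate $a(x)$, the generator acts as $LV(x) = L_0V(x) - a(x)V(x)$, where $L_0$ contains only the birth and the non-absorbing death transitions, and $L\mathbbm{1}_E(x) = -a(x)$. The criterion~\eqref{eq:Lyap} rewrites as
$$
\mu(L_0V) + \mu(V)\mu(a) - \mu(aV) \leq A - B\mu(W).
$$
Two kinds of dissipation are thus available: a pointwise bound $L_0V\leq A_0 - B_0 W$, and a partial positive correlation of $a$ with $V$ making $\mu(V)\mu(a) - \mu(aV)$ controlled by a small fraction of $\mu(W)$.

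My first choice is $V(x) = F(|x|)$, with $F:\mathbb{N}\to\mathbb{R}_+$ bounded and non-decreasing, obtained by applying Theorem~\ref{thm:PNM-1D} to the one-dimensional logistic birth and death process dominating $|X_t|$ from above, namely the one with birth rate $\bar\beta n$ and death rate asymptotic to $\underline c n(n-r)/r$. Indeed, by~\eqref{eq:bounds-on-coeffs} and the elementary inequality $\sum_i x_i(x_i-1)\geq|x|(|x|-r)/r$ we have $\sum_i b_i(x)\leq\bar\beta|x|$ and $\sum_i\mathbbm{1}_{x_i\geq 2}d_i(x)\geq\underline c|x|(|x|-r)/r$; since $F$ is non-decreasing, the signs of the discrete derivatives of $F$ then give
$$
L_0V(x) \leq -W_0(|x|)
$$
for any norm-like $W_0:\mathbb{N}\to\mathbb{R}_+$ satisfying $\sum_n W_0(n)/n^2<\infty$, in accordance with Example~\ref{ex:exa-log-1D}. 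This suggests taking $W(x) := W_0(|x|)$ with, for instance, $W_0(n) = n^{1-\varepsilon}$ for a small $\varepsilon \in (0, 1-\eta')$.

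The nonlinear correction $\mu(V)\mu(a) - \mu(aV)$ is then split according to $a = \alpha + \hat a$, where
$$
\hat a(x) := \sum_{i=1}^r\mathbbm{1}_{x_i=1}\Bigl(\delta_i(x) + \sum_{j\neq i}c_{ij}(x)x_j\Bigr).
$$
For the catastrophe piece $\alpha$, the argument of the proof of Theorem~\ref{thm:PNM-1D} carries over: writing $\alpha = \kappa^- + (\alpha - \kappa^-)$ and using Chebyshev's sum inequality on the pushforward of $\mu$ under $x\mapsto|x|$ yields $\mu(\kappa^- V)\geq\mu(\kappa^-)\mu(V)$, since both $\kappa^-$ and $F$ are non-decreasing functions of $|x|$; meanwhile, the oscillation $\alpha-\kappa^- = o(|x|^{\eta'})$ provided by Assumption~H2 is absorbed into a small fraction of $\mu(W)$ as long as $W_0(n)\gtrsim n^{\eta'}$, which is compatible with our choice because $\eta'<1$.

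The main obstacle is the boundary contribution $\hat a$: it grows linearly in $|x|$ and is not a function of $|x|$ alone, so Chebyshev does not apply. This is precisely where Assumption~H1 intervenes, through the pointwise estimate $\hat a(x)\leq r\bar\delta + \sum_{i\neq j}c_{ij}(x)x_j\leq r\bar\delta + (1-\eta)|x|^{-1}\sum_k c_{kk}(x)x_k(x_k-1)$, which transfers the boundary rate back into a fraction $(1-\eta)$ of the very intra-specific death rate that already fuels the dissipation inside $L_0V$. The real technical heart of the argument --- and the hardest step --- is turning this pointwise comparison into a global bound on $\mu(aV) - \mu(V)\mu(a)$ via a careful re-use of $\mu(L_0V)$, while calibrating the exponents $\varepsilon$, $\eta'$, $\eta$ so that all error terms sit strictly below $B\mu(W)$.
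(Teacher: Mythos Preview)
Your outline is structurally close to the paper's argument, but it stops exactly at the point that carries all the weight, and the way you set things up makes that step harder than it needs to be. Two concrete issues:

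\medskip\noindent\textbf{1. The choice of $V$ and the range of $\varepsilon$.} You take $V(x)=F(|x|)$ with $F$ the abstract function produced by Theorem~\ref{thm:PNM-1D} and propose $W_0(n)=n^{1-\varepsilon}$ with ``small $\varepsilon\in(0,1-\eta')$''. The paper instead uses the explicit
\[
V(x)=\sum_{j=1}^{|x|}\frac{1}{j^{1+\varepsilon}},
\]
whose only purpose is to give the clean tail estimate
\[
\|V\|_\infty - V(x)\le \frac{1}{\varepsilon\,|x|^{\varepsilon}}.
\]
This is the quantitative replacement for your unexecuted ``careful re-use of $\mu(L_0V)$''. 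With this $V$, the intra-specific part of $LV$ is exactly $-\sum_{i}c_{ii}(x)x_i(x_i-1)/|x|^{1+\varepsilon}$, and the boundary piece $\hat a(x)\bigl(\|V\|_\infty-V(x)\bigr)$, combined with H1, becomes at most $\frac{1-\eta}{\varepsilon}$ times that \emph{same} expression. The argument therefore closes iff $\varepsilon>1-\eta$, the opposite of ``small $\varepsilon$''; the correct window is $\varepsilon\in(1-\eta,1-\eta')$, which is nonempty by H2. Your calibration misses this constraint.

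\medskip\noindent\textbf{2. Collapsing $L_0V\le -W_0(|x|)$ loses the matching.} Once you record only $L_0V(x)\le -W_0(|x|)$ via one-dimensional domination, you can no longer absorb $\|V\|_\infty\mu(\hat a)$ or even $\mu(\hat a(\|V\|_\infty-V))$: the former is of order $\mu(|x|)$ while $W_0(|x|)=|x|^{1-\varepsilon}$, and for the latter you would need $\|F\|_\infty-F(n)\lesssim n\bigl(F(n)-F(n-1)\bigr)$ with a constant strictly below $(1-\eta)^{-1}$, a property you have not established for the abstract $F$ of Theorem~\ref{thm:PNM-1D}. The paper avoids this by keeping the pointwise form of the dissipation and matching it term-by-term against the pointwise bound on $\hat a(x)\bigl(\|V\|_\infty-V(x)\bigr)$.

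\medskip
In short, the $\alpha$-part of your argument (FKG on $\kappa^-$, oscillation absorbed into $W$) is exactly the paper's; the $\hat a$-part is the whole difficulty and is not done. To finish, replace your abstract $F$ by $F(n)=\sum_{j\le n}j^{-1-\varepsilon}$ with $\varepsilon\in(1-\eta,1-\eta')$ and bound $\mu(\hat a)(\mu(V))-\mu(\hat aV)\le\mu\bigl(\hat a(\|V\|_\infty-V)\bigr)$ pointwise using H1 and the tail estimate above.
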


\begin{rem}
  \label{rem:thm-one-plus-general}
  A slight modification of the proof shows that one can replace Assumption~(H1) by 
  \begin{align}
    \label{eq:alt-hyp}
    \sum_{j=1}^r \frac{x_j}{|x|}\mathbbm{1}_{x_j\neq 1}\sum_{k=1}^r c_{jk}(x)(x_k-\mathbbm{1}_{k=j})\geq \frac{1}{1-\eta}
    \sum_{j=1}^r \mathbbm{1}_{x_j=1}\sum_{k=1}^r c_{jk}(x)(x_k-\mathbbm{1}_{k=j})
  \end{align}
  for $|x|$ large enough. 
While it might be less biologically relevant, this assumption allows to handle situations that do not fit into the strong intra-specific assumption.
For example, when $r=2$, $c_{12}(x)=x_1$ and $c_{21}(x)=c_{11}(x)=c_{22}(x)=1$, the inequality~\eqref{eq:alt-hyp} holds true for large values of $|x|$.  
\end{rem}

\begin{proof}[Proof of Theorem~\ref{thm:one}]
Let us first consider the case where $\alpha(x)=0$ for all $x\in\NN^r$.
Fix $\varepsilon\in(0,1)$ and define for
all $x\in\NN^r$
$$
V(x)=\sum_{j=1}^{|x|}\frac{1}{j^{1+\varepsilon}}.
$$
For all $x, y\in\NN^r$ such that $|x|\leq|y|$, we have in
particular the inequality
\begin{multline}
  \label{eq:bound-V}
  \frac{1}{\varepsilon}\left(\frac{1}{(|x|+1)^\varepsilon}-\frac{1}{(|y|+1)^\varepsilon}\right)=\int_{|x|+1}^{|y|+1}\frac{dz}{z^{1+\varepsilon}} \\
  \leq V(y)-V(x)
  \leq\int_{|x|}^{|y|}\frac{dz}{z^{1+\varepsilon}}=\frac{1}{\varepsilon}\left(\frac{1}{|x|^\varepsilon}-\frac{1}{|y|^\varepsilon}\right).
\end{multline}
\medskip

For any $x\in \NN^r$, we have
\begin{align*}
 LV(x)&=\sum_{i=1,\,x_i\neq 1}^r \frac{\beta_i(x)x_i}{(|x|+1)^{1+\varepsilon}}-\frac{\delta_i(x)x_i+\sum_{j=1}^r c_{ij}(x)
         x_i (x_j-\mathbbm{1}_{j=i})}{|x|^{1+\varepsilon}} \\
 &\quad\quad+\sum_{i=1,\,x_i=1}^r\frac{\beta_i(x)}{(|x|+1)^{1+\varepsilon}}-\left[\delta_i(x)+\left(\sum_{j=1,\ j\neq i}^r c_{ij}(x)
       x_j\right)\right]V(x)
\end{align*}
Using the inequality assumption~\eqref{eq:bounds-on-coeffs},
\begin{align*}
 LV(x)\leq\bar \beta
    |x|^{-\varepsilon}-\sum_{i=1,\,x_i\neq 1}^r\frac{ c_{ii}(x)
         x_i (x_i-1)}{|x|^{1+\varepsilon}}
 -\sum_{i=1,\,x_i=1}^r\left(\delta_i(x)+\sum_{j=1,\ j\neq i}^r c_{ij}(x) x_j\right)
    V(x). 
\end{align*}
Moreover, for any probability measure $\mu$ on $\NN^r$,
$$
-\mu(V)\mu(L\mathbbm{1}_{\NN^r})\leq\|V\|_\infty\sum_{x\in\NN^r}\mu(x)
\left(\sum_{i=1,x_i=1}^r\delta_i(x)+\sum_{j=1,\ j\neq i}^r c_{ij}(x) x_j\right).
$$
The last two equations imply
$$
\mu(LV) -\mu(V)\mu(L\mathbbm{1}_{\NN^r})\leq \sum_{x\in\NN^r}\mu(x)\left[\bar \beta
  |x|^{-\varepsilon}-\sum_{i=1,\,x_i\neq 1}^r\frac{ c_{ii}(x)
         x_i (x_i-1)}{|x|^{1+\varepsilon}}\right]+(*),
$$
where, by~\eqref{eq:bound-V},
\begin{align*}
  (*) & :=\sum_{x\in\NN^r}\mu(x)\sum_{i=1,x_i=1}^r \left[\delta_i(x)+\sum_{j=1,\ j\neq i}^r
      c_{ij}(x) x_j\right]\left(\|V\|_\infty-V(x)\right) \\ 
  & \leq\sum_{x\in\NN^r}\mu(x)\sum_{i=1,x_i=1}^r\frac{\bar \delta+\sum_{j\neq i}c_{ij}(x)x_j}{\varepsilon|x|^\varepsilon} 
\end{align*}

Using Assumption~H1, we see that for all $\varepsilon\in(1-\eta,1)$, there exist some positive constants $A,B,B',C,D>0$ independent of
$x$ (but dependent on $\varepsilon$) and such that
\begin{align*}
  \mu(LV) -\mu(V)\mu(L\mathbbm{1}_{\NN^r}) & \leq \sum_{x\in\NN^r}\mu(x)
  \left(A|x|^{-\varepsilon}-B\sum_{i=1,\,x_i\neq 1}^r\frac{ c_{ii}(x)
         x_i (x_i-1)}{|x|^{1+\varepsilon}}\right) \\
  & \leq \sum_{x\in\NN^r}\mu(x)
  \left(A|x|^{-\varepsilon}-B'\underline{c}|x|^{1-\varepsilon}\right) \\
  & \leq C-D\sum_{x\in\NN^r}|x|^{1-\varepsilon}\mu(x)=C-D\mu(W),
\end{align*}
where we used the fact that $x_i\geq |x|/r$ for at least one index $i$ and where $W:x\in\NN^r\mapsto |x|^{1-\varepsilon}$. This entails that the assumptions of Theorem~\ref{thm:Lyap} are satisfied and concludes the proof of Theorem~\ref{thm:one} in the case where $\alpha(x)=0$ for all $x\in\NN^r$.

Assume now that $\alpha(x)\neq 0$ for some $x\in\NN^r$. In this case, the same calculation as above leads to
\begin{align*}
\mu(LV)-\mu(V)\mu(L\11_{\NN^r})\leq C-D\sum_{x\in \NN^r}|x|^{1-\varepsilon}\mu(x)-\mu(\alpha V)+\mu(V)\mu(\alpha).
\end{align*}
We obtain
\begin{align*}
-\mu(\alpha V)+\mu(V)\mu(\alpha)&\leq -\mu((\alpha-\kappa^-)V) + \mu(V)\mu(\alpha-\kappa^-)\\
&\leq \mu(V)\mu(\alpha-\kappa^-)\leq \|V\|_\infty \mu(\text{Osc}(\alpha))
\end{align*}
where we used the fact that $-\mu(\kappa^- V)+\mu(V)\mu(\kappa^-)\leq 0$ by the FKG inequality, since both $\kappa^-(x)$ and $V(x)$
are non-decreasing with $|x|$, and the fact that $\alpha-\kappa^-\geq 0$. Now, by assumption H2, since we may choose
$\varepsilon\in(1-\eta,1-\eta')$,
\begin{align*}
\text{Osc}(\alpha)(x)=o(|x|^{1-\varepsilon})\text{ when $|x|\rightarrow+\infty$},
\end{align*}
and we deduce that there exist some positive constants $A',B'$ such that
\begin{align*}
\mu(LV)-\mu(V)\mu(L\11_{\NN^r})\leq A'-B'\mu(W),
\end{align*}
for the same norm-like function $W$ as above. This and Theorem~\ref{thm:Lyap} allow us to conclude the proof of
Theorem~\ref{thm:Lyap}.
\end{proof}

\bibliographystyle{abbrv}
\bibliography{biblio-bio,biblio-denis,biblio-math,biblio-math-nicolas}

\end{document}